\newtheorem {Lemma}{Lemma}[section]
\newtheorem {Theorem} {Theorem}[section]
\newenvironment {Proof} {\noindent {\textit{Proof of Theorem~\ref{conn-graph}.}}}{\quad $\square$\par\vspace{3mm}}
\begin{document}

\title{Graphs characterized by the second distance eigenvalue}

\author{Rundan Xing, Bo Zhou\footnote{Corresponding author. E-mail: zhoubo@scnu.edu.cn}\\
Department of Mathematics, South China Normal University,\\
Guangzhou 510631, P.R. China}

\date{}
\maketitle

\begin{abstract}
We characterize all connected graphs with second distance eigenvalue less than $-0.5858$. \\ \\
{\bf Keywords:} distance, distance matrix, distance eigenvalues, forbidden subgraph, diameter
%\\ \\
%{\bf AMS classifications:} 05C50, 15A18
\end{abstract}

\section{Introduction}

We consider simple undirected graphs. Let $G$ be a connected graph
with vertex set $V(G)=\{v_1,\dots,v_n\}$. For
$1\le i,j\le n$, the distance between vertices $v_i$ and $v_j$ in $G$,
denoted by $d_G(v_i,v_j)$ or simply $d_{v_iv_j}$, is the length of a
shortest path connecting them in $G$. The distance matrix of $G$ is
the $n\times n$ matrix $D(G)=(d_{v_iv_j})$. Since
$D(G)$ is symmetric, the eigenvalues of $D(G)$ are
all real numbers. The distance eigenvalues of $G$, denoted by
$\lambda_1(G),\dots,\lambda_n(G)$, are the eigenvalues of
$D(G)$, arranged in non-increasing order. For $1\le k\le n$, we call $\lambda_k(G)$ the $k$th distance eigenvalue of $G$.

%We consider simple undirected graphs. Let $G$ be a connected graph
%with vertex set $V(G)=\{v_1,\dots,v_n\}$. For
%$1\le i,j\le n$, the distance between vertices $v_i$ and $v_j$ in $G$,
%denoted by $d_G(v_i,v_j)$ or simply $d_{v_iv_j}$, is the length of a
%shortest path connecting them in $G$. The distance matrix of $G$ is
%the $n\times n$ matrix $D(G)=(d_{v_iv_j})$. Since
%$D(G)$ is symmetric, the eigenvalues of $D(G)$ are
%all real numbers. The $k$-th distance eigenvalue of $G$, denoted by
%$\lambda_k(G)$, is the $k$-th largest eigenvalue of
%$D(G)$, where $1\le k\le n$. $\lambda_n(G)$ is also called the least distance eigenvalue of $G$.

The study of distance eigenvalues dates back to the classical work
of Graham and Pollack~\cite{GP}, Edelberg et al.~\cite{EGG} and Graham
and Lov\'asz~\cite{GL} in 1970s. Merris~\cite{Me90} studied the
relations between the distance eigenvalues and the Laplacian
eigenvalues of trees. The first distance eigenvalue has received much attention. Ruzieh
and Powers~\cite{SuPo90} %and Stevanovi\'c and Ili\'c~\cite{SI}
showed that the path $P_n$ is the unique $n$-vertex connected graph
with maximal first distance eigenvalue, while the complete graph
$K_n$ is the unique $n$-vertex connected graph with minimal first
distance eigenvalue. Among others, Stevanovi\'c and Ili\'c~\cite{SI}
showed that the star $S_n$ is the unique $n$-vertex tree with
minimal first distance eigenvalue. The extremal graphs with maximal
or minimal first distance eigenvalues may be found in, e.g.,
\cite{BNP,NP,WZ,XZD,YW}.
%, and determined the
%unique graph with maximal first distance eigenvalue among the trees
%with fixed number of vertices and maximal degree. Ili\'c~\cite{Il}
%and Nath and Paul~\cite{NP} determined the unique graphs with
%minimal and maximal distance spectral radii respectively among the
%trees with fixed number of vertices and matching number. More work
%on this line may be found in~\cite{BNP,NPL,WZ,YW}.
The last (least) distance eigenvalue has also received some
attention, see~\cite{LZ,Yu}.
% Yu~\cite{Yu} characterized all connected graphs with
%least distance eigenvalue belonging to  $[-2.383,0]$, and all trees
%with least distance eigenvalue belonging to  $(-2-\sqrt{2},0]$. Lin
%and Zhou~\cite{LZ} characterized all trees with least distance
%eigenvalue belonging to  $[-3-\sqrt{5},-2-\sqrt{2}]$, and all
%unicyclic and bicyclic graphs with least distance eigenvalue
%belonging to $(-2-\sqrt{2},-2.383)$ respectively, and they also
%determined the unique trees with the $k$-th largest least distance
%eigenvalues for every positive integer $k$ up to $28$, and the
%unique unicyclic and bicyclic graphs with the $k$-th largest least
%distance eigenvalues for every positive integer $k$ respectively.

%Graphs characterized by the second eigenvalue of the adjacency
%matrix have been studied extensively, see~\cite{Ca, CS}.

%Let $K_n$ be the complete graph on $n$ vertices. %For positive integers $n_1,\dots,n_r$, let
%$K_{n_1,\dots,n_r}$ be the complete $r$-partite graph with partite
%sizes $n_1,\dots,n_r$ respectively.
%
%
%\begin{Lemma}\cite{Yu}\label{least}
%Let $G$ be a connected graph on $n\ge 3$ vertices. Then
%
%(i) $\lambda_n(G)\le -1$ with equality if and only if $G\cong K_n$;
%
%(ii) if $G\not\cong K_{n_1,\dots,n_r}$ for some $r$ with $r\ge 2$ and $\sum_{i=1}^r n_i=n$, then $\lambda_n(G)<-2.383$.
%\end{Lemma}

Let $G_1$ and $G_2$ be two vertex-disjoint graphs. $G_1\cup G_2$
denotes the vertex-disjoint union of $G_1$ and $G_2$, and $G_1\vee
G_2$ denotes the graph obtained from $G_1\cup G_2$ by joining each
vertex of $G_1$ and each vertex of $G_2$ using an edge.

In this paper, we characterize all connected graphs with second
distance eigenvalue less than $-0.5858$. We prove the following
result.

\begin{Theorem}\label{conn-graph}
Let $G$ be a connected graph with at least two vertices. Then
$\lambda_2(G)\in(-\infty,-0.5858)$ if and only if $G\cong K_n$ for
some $n\ge 2$, or $G\cong K_1\vee(K_{n_1}\cup K_{n_2})$ for some
$n_1,n_2\ge 1$, or $G\cong K_1\vee(K_{n_1}\cup K_{n_2}\cup K_{n_3})$
for some  $n_1,n_2,n_3\ge 1$, or $G\cong K_1\vee(K_{n_1}\cup
K_{n_2}\cup K_{n_3}\cup K_{n_4})$ for some $n_1,n_2,n_3, n_4$ such
that $1\le n_1\le n_2\le n_3\le n_4$ and one of the following items
holds$:$

(i) $n_1=n_2=1$ and $1\le n_3\le 873$;

(ii) $n_1=n_2=1$, $n_3\ge 874$, and $r(-0.5858)<0$, where
\begin{eqnarray*}
   r(\lambda)
&=&
   \lambda^4-(n_3+n_4)\lambda^3-(3n_3n_4+8n_3+8n_4+5)\lambda^2\\
& &
   -(12n_3n_4+11n_3+11n_4+6)\lambda-(6n_3n_4+4n_3+4n_4+2);
\end{eqnarray*}

(iii) $n_1=1$, $n_2=2$, and $n_3=2$, or $3=n_3\le n_4\le 870$, or $4=n_3\le n_4\le 14$, or $5=n_3\le n_4\le 8$, or $n_3=n_4=6$;

(iv) $n_1=1$, $n_2=3$, and $3=n_3\le n_4\le 7$ or $n_3=n_4=4$;

(v) $n_1=n_2=n_3=2$ and $2\le n_4\le 5$.
\end{Theorem}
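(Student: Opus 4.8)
The plan is to work in three stages: bound the diameter, reduce to a short list of candidate graph families by forbidden induced subgraphs, and finally compute $\lambda_2$ exactly on each family. Throughout, the basic tool is Cauchy interlacing: if $B$ is an $m\times m$ principal submatrix of $D(G)$, then $\lambda_2(G)\ge\lambda_2(B)$. First I would bound the diameter. Suppose $\mathrm{diam}(G)\ge 3$ and pick a geodesic $v_1v_2v_3v_4$. Along a shortest path the mutual distances are forced to be $1,1,1,2,2,3$, so the principal submatrix of $D(G)$ on these four vertices is exactly $D(P_4)$. A direct computation (using the persymmetry of $D(P_4)$) gives spectrum $2\pm\sqrt{10},\,-2\pm\sqrt{2}$, so $\lambda_2(D(P_4))=\sqrt{2}-2\approx-0.585786$. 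Since $-0.5858<\sqrt{2}-2$, interlacing yields $\lambda_2(G)\ge\sqrt{2}-2>-0.5858$, contradicting the hypothesis. Hence $\mathrm{diam}(G)\le 2$, and consequently $D(G)=2(J-I)-A(G)$.

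Next I would carry out a forbidden-subgraph reduction. For a diameter-$\le 2$ graph, the principal submatrix of $D(G)$ on any induced subgraph $H=G[S]$ is $2(J-I)-A(H)$; denote its second eigenvalue by $\lambda_2'(H)$, so $\lambda_2'(H)\ge-0.5858$ excludes $G$. Direct computation gives $\lambda_2'(C_4)=0$, $\lambda_2'(K_4-e)=\tfrac{3-\sqrt{17}}2\approx-0.5616$, $\lambda_2'(C_5)=\tfrac{\sqrt5-3}2\approx-0.382$, and $\lambda_2'(K_{1,5})=4-\sqrt{21}\approx-0.5826$, each at least $-0.5858$. Thus $C_4$, the diamond $K_4-e$, $C_5$, and $K_{1,5}$ are all forbidden. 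These (together with a small finite list of further explicit checks) should force the structure: excluding the diamond and $C_4$ makes adjacency among ``common-neighbour'' configurations rigid, diameter $2$ plus the absence of $C_4,C_5$ yields a dominating vertex $v$, the diamond-freeness forces $G-v$ to be $P_3$-free (a disjoint union of cliques), and $K_{1,5}$-freeness caps the number of cliques at $4$. Hence $G\cong K_n$ or $G\cong K_1\vee(K_{n_1}\cup\cdots\cup K_{n_k})$ with $2\le k\le 4$.

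I would then compute the spectrum on each family. For $G=K_1\vee(K_{n_1}\cup\cdots\cup K_{n_k})$ the partition into $\{v\}$ and the $k$ cliques is equitable: a vector summing to zero on a single clique and vanishing elsewhere is an eigenvector for $-1$, giving eigenvalue $-1$ with multiplicity $n-k-1$, while the rest of the spectrum is that of the $(k+1)\times(k+1)$ quotient matrix $Q$ with rows/columns indexed by $0$ (for $v$) and $1,\dots,k$, where $Q_{00}=0$, $Q_{0i}=n_i$, $Q_{i0}=1$, $Q_{ii}=n_i-1$, and $Q_{ij}=2n_j$ for distinct $i,j\ge1$. Since $-1<-0.5858$, the condition $\lambda_2(G)<-0.5858$ is equivalent to $\lambda_2(Q)<-0.5858$. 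For $k=2,3$ one checks this holds for all $n_i$ (in the limit of large equal cliques the relevant root of $Q$ tends to $-k/(2k-1)\le-\tfrac35<-0.5858$), which accounts for the unconditional families $K_n$, the two-clique, and three-clique cases, as well as $K_n$ itself where $\lambda_2=-1$. For $k=4$ the same limit tends to $-\tfrac47\approx-0.5714>-0.5858$, so the four-clique family survives only under restrictions on the $n_i$; indeed the cutoff $k=2+\sqrt2\approx3.414$ is exactly where $-k/(2k-1)$ crosses $\sqrt2-2$.

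The hard part will be the $k=4$ case analysis, which produces all the explicit numerics in (i)--(v). Here I would exploit symmetry whenever some $n_i$ coincide: when $n_1=n_2=1$ the antisymmetric combination of the two singleton cliques splits off an eigenvalue $-2$ and collapses $Q$ to a $4\times4$ block whose characteristic polynomial is the displayed $r(\lambda)$. One then reduces $\lambda_2(Q)<-0.5858$ to the sign condition $r(-0.5858)<0$: since the Perron root is large, $r(-0.5858)<0$ places $-0.5858$ strictly above the second root. The delicate threshold is visible already from $r$: the coefficient of the top-order term $n_3n_4$ at $\lambda$ is $-3(\lambda^2+4\lambda+2)$, which vanishes precisely at $\lambda=\sqrt2-2$; because $-0.5858$ sits just below $\sqrt2-2$, evaluating there makes the $n_4\to\infty$ behaviour of $r(-0.5858)$ change sign near $n_3\approx 873.9$, yielding the split $n_3\le873$ (always admissible) versus $n_3\ge874$ (admissible iff $r(-0.5858)<0$). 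The analogous symmetry reductions for $(n_1,n_2)=(1,2),(1,3),(2,2)$ give the finite ranges in (iii)--(v). The genuine obstacles are proving monotonicity of the critical root of $Q$ in each coordinate $n_i$ (so that the admissible set is an interval in each variable), certifying the exact integer endpoints $873,874,870,14,8,7,6,5,\dots$, and verifying throughout that it is the second eigenvalue—rather than a lower one—crossing $-0.5858$, while handling strictness carefully at the non-round cutoff $-0.5858$ chosen just below $\sqrt2-2$.
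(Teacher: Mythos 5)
Your proposal is correct in substance and follows the same overall skeleton as the paper---interlacing against a short list of forbidden configurations to reduce to $K_n$ and $K_1\vee(K_{n_1}\cup\dots\cup K_{n_r})$ with $r\le 4$, then explicit characteristic polynomials---but it organizes the structural reduction differently. The paper never bounds the diameter; instead it cites Merris's lemma (a graph and its complement cannot both be connected without an induced $P_4$) to conclude that $\overline{G}$ is disconnected, so $G$ is a join, and then iterates this on the join factors. You bound the diameter first via an isometric $P_4$ (correct: $\lambda_2(D(P_4))=\sqrt2-2>-0.5858$) and then argue structurally from the induced subgraphs $C_4$, $K_4-e$, $C_5$, $K_{1,5}$ inside a diameter-$2$ graph. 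This route does work, but the sentence ``diameter $2$ plus the absence of $C_4,C_5$ yields a dominating vertex'' is false as literally stated---you need the diamond as well; the clean way to close your step is to note that in a diameter-$2$ graph an induced $P_4=v_1v_2v_3v_4$ together with a common neighbour $w$ of $v_1,v_4$ forces an induced diamond, $C_4$ or $C_5$ according to the adjacencies of $w$ to $v_2,v_3$, so $G$ is $P_4$-free, hence a cograph, and the join decomposition then gives exactly the paper's Claim~1 analysis (one join factor must be $K_1$, there are only two factors, the other factor is a disjoint union of at most four cliques). Your equitable-partition/quotient-matrix computation is equivalent to the paper's determinant reduction and yields the same polynomials $f,g,h,r$; your limiting value $-k/(2k-1)$ and the observation that the $n_3n_4$-coefficient of $r$ vanishes at $\sqrt2-2$ nicely explain why $r\le3$ is unconditional and why the threshold $n_3\approx 873$ appears, though for a complete proof the $k=2,3$ cases still need the sign checks $f(-1),g(-1)>0$ and $f(-0.5858),g(-0.5858)<0$ (or, as you suggest, coordinatewise monotonicity, which is immediate from interlacing since enlarging a clique produces an isometric supergraph---this is exactly how the paper certifies the integer endpoints in cases (iii)--(v), by one interlacing bound above each cutoff plus a finite table below it). In short: same method, with your diameter-first reduction trading the paper's complement lemma for a small explicit case analysis, and your quotient matrices trading its determinant manipulations for equivalent linear algebra.
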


\section{Proof of Theorem~\ref{conn-graph}}

For an $n\times n$ symmetric matrix $M$, let
$\mu_1(M),\dots,\mu_n(M)$ be the eigenvalues of $M$, arranged in non-increasing order. Let $A$
be an $n\times n$ symmetric matrix, and $B$ an $m\times m$ principal
submatrix of $A$. The interlacing theorem \cite[pp.~185--186]{HJ}
states that $\mu_{n-m+i}(A)\le\mu_i(B)\le\mu_i(A)$ for $1\le i\le
m$.

Let $G$ be an $n$-vertex connected graph, and $H$ an $m$-vertex
induced subgraph of $G$, where $m\ge 2$. If $H$ is connected and
$d_H(u,v)=d_G(u,v)$ for all $\{u,v\}\subseteq V(H)$, then write
$H\trianglelefteq G$. Obviously, if $H$ is of diameter two, then
$H\trianglelefteq G$. If $H\trianglelefteq G$, then $D(H)$ is a
principal submatrix of $D(G)$, and thus from the interlacing theorem,
$\lambda_2(G)\ge\lambda_2(H)$ and $\lambda_n(G)\le\lambda_m(H)$.

For integer $n\ge 1$, let $nG$ be the vertex-disjoint union of $n$ copies of graph $G$.

Let $I_n$ be the $n\times n$ identity matrix, and $J_{m\times n}$
the $m\times n$ all-one matrix. For convenience, let $J_n=J_{n\times
n}$ and  $\mathbf{1}_n=J_{n\times 1}$.

\begin{Lemma}\label{Kn12-Kn123}
For integers $n_1,n_2,n_3\ge 1$, $\lambda_2(K_1\vee(K_{n_1}\cup K_{n_2}))\in(-1,-0.5858)$ and $\lambda_2(K_1\vee(K_{n_1}\cup K_{n_2}\cup K_{n_3}))\in(-1,-0.5858)$.
\end{Lemma}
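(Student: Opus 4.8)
The plan is to use that both graphs have diameter two, so that the distance matrix satisfies $D=2(J-I)-A$ and the vertex partition into the apex vertex and the vertex sets of the cliques is equitable. First I would read off the spectral decomposition. Any vector that is zero on the apex and on all but one clique, and has zero sum on that clique, is an eigenvector of $D$ for the eigenvalue $-1$; this produces $-1$ with multiplicity $\bigl(\sum_i n_i\bigr)-k$, where $k\in\{2,3\}$ is the number of cliques. The remaining $k+1$ eigenvalues are exactly those of the quotient matrices
\[
Q_2=\begin{pmatrix}0&n_1&n_2\\1&n_1-1&2n_2\\1&2n_1&n_2-1\end{pmatrix},\qquad
Q_3=\begin{pmatrix}0&n_1&n_2&n_3\\1&n_1-1&2n_2&2n_3\\1&2n_1&n_2-1&2n_3\\1&2n_1&2n_2&n_3-1\end{pmatrix},
\]
each similar to a symmetric matrix and with Perron eigenvalue $\mu_1=\lambda_1(G)>0$. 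Thus the lemma reduces to showing $-1<\mu_2(Q)<-0.5858$ for the second eigenvalue of the relevant quotient.

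For the lower bound I would invoke interlacing. Taking the apex together with one vertex from each clique yields an induced subgraph of diameter two, namely $P_3=K_1\vee(K_1\cup K_1)$ when $k=2$ and $K_{1,3}=K_1\vee 3K_1$ when $k=3$; since these have diameter two they satisfy $P_3\trianglelefteq G$ and $K_{1,3}\trianglelefteq G$. A short computation gives $\lambda_2(P_3)=1-\sqrt3$ and $\lambda_2(K_{1,3})=2-\sqrt7$, both strictly larger than $-1$, so $\lambda_2(G)\ge\lambda_2(H)>-1$. Because $-1$ can therefore not be the second eigenvalue of $D$, this also identifies $\lambda_2(G)=\mu_2(Q)$ and settles the lower bound.

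The substantial part is the upper bound $\mu_2(Q)<-0.5858$. I would compute $p(\lambda)=\det(\lambda I-Q)$ explicitly in the $n_i$. For $k=2$ this $p$ is a cubic, and the four signs $p(+\infty)>0$, $p(-0.5858)<0$, $p(-1)>0$, $p(-\infty)<0$ — the two middle ones to be verified as polynomial inequalities in $n_1,n_2$ — locate one root in each of $(-0.5858,\infty)$, $(-1,-0.5858)$, $(-\infty,-1)$, so $\mu_2(Q_2)\in(-1,-0.5858)$ outright. For $k=3$ the quartic is more delicate: $p(-0.5858)<0$ no longer determines the count, since it is equally consistent with two roots exceeding $-0.5858$. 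Here I would add a definiteness argument: symmetrizing $Q_3$ to $\hat Q$ and exhibiting a three-dimensional subspace on which $(-0.5858)I-\hat Q$ is positive definite — for instance the two clique-difference directions, where $\hat Q$ is strongly negative, together with one further vector — so that Cauchy interlacing forces at least three eigenvalues of $\hat Q$ below $-0.5858$; hence $\mu_2(Q_3)\le-0.5858$, with strictness from $p(-0.5858)\ne0$.

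I expect the uniform verification of these sign and definiteness conditions to be the main obstacle, precisely because the threshold is so tight. The second quotient eigenvalue is largest in the balanced regimes, where $\mu_2(Q)\to-\tfrac{k}{2k-1}$, that is $-\tfrac23$ for two cliques and $-\tfrac35=-0.6$ for three, only marginally below $-0.5858$. The resulting inequalities in $n_1,n_2,n_3$ therefore carry essentially no slack and must be handled by exact algebra rather than crude bounding, and tracking the sign of the relevant minors across all regimes $1\le n_1\le n_2\le n_3$ is where the bulk of the computation lies.
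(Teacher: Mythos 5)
Your overall architecture matches the paper's: both reduce the spectrum to $-1$ with multiplicity $\bigl(\sum_i n_i\bigr)-k$ plus the eigenvalues of a $(k+1)\times(k+1)$ quotient (the paper obtains the same polynomials $f$ and $g$ by row-reducing $\det(\lambda I-D)$ directly), both get the lower bound from $P_3\trianglelefteq G$ and $K_{1,3}\trianglelefteq G$, and your sign analysis of the cubic at $-\infty,-1,-0.5858,+\infty$ is exactly the paper's argument for two cliques. The sign conditions you leave ``to be verified'' do hold with all coefficients of one sign (e.g.\ $f(-0.5858)=-0.2426n_1n_2-0.1716(n_1+n_2)-0.1005$), so the tightness you worry about does not actually bite in this lemma. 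The two-clique half of your proof is complete in outline.

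The gap is in the three-clique case. You correctly note that $p(-0.5858)<0$ alone is consistent with either one or three roots of the quartic exceeding $-0.5858$, but your proposed fix --- a three-dimensional subspace on which $(-0.5858)I-\hat Q$ is positive definite, spanned by the two clique-difference directions ``together with one further vector'' --- is never carried out, and that unnamed further vector is the entire difficulty: the third eigenvalue below the threshold is $\lambda_2\in(-0.6,-0.5858)$ (approximately), so an admissible vector would have to track the corresponding eigenvector uniformly in $n_1,n_2,n_3$ with only a few thousandths of slack, and you offer no candidate. The paper closes the count much more cheaply: interlacing with $S_4=K_1\vee 3K_1$ also gives $\lambda_n(G_2)\le\lambda_4(S_4)=-2$, so $g$ has a root $\le -2$; since $g$ is monic of even degree with $g(-1)>0$, it has exactly two roots below $-1$, and the sign change on $(-1,-0.5858)$ then pins the second largest root, which is $\lambda_2(G_2)$, into that interval. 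Ironically, your own clique-difference computation already supplies an equivalent fix: for $v=(0,a_1,a_2,a_3)$ with $\sum_i a_i\sqrt{n_i}=0$ one gets $v^{\top}\hat Q v=-\sum_i a_i^2 n_i-\sum_i a_i^2\le -2\|v\|^2$, so $\mu_3(\hat Q)\le -2$ by min--max, two quotient eigenvalues lie below $-1$, and the root count closes with no further vector needed. As written, however, the three-clique half of your argument is incomplete.
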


\begin{proof}
Let $G_1=K_1\vee(K_{n_1}\cup K_{n_2})$.
We have, with respect to the partition $V(G_1)=V(K_1)\cup V(K_{n_1})\cup V(K_{n_2})$, that
\[
D(G_1)=\left[\begin{array}{ccc}
  0 & \mathbf{1}_{n_1}^{\top} & \mathbf{1}_{n_2}^{\top}\\
   \mathbf{1}_{n_1} & J_{n_1}-I_{n_1} & 2J_{n_1\times n_2}\\
   \mathbf{1}_{n_2} & 2J_{n_2\times n_1} & J_{n_2}-I_{n_2}
   \end{array}\right].
\]
Then the characteristic polynomial of $D(G_1)$ is
\begin{eqnarray*}
   \det(\lambda I_{n_1+n_2+1}-D(G_1))
&=&
   \left|\begin{array}{ccc}
   \lambda & -\mathbf{1}_{n_1}^{\top} & -\mathbf{1}_{n_2}^{\top}\\
   -\mathbf{1}_{n_1} & (\lambda+1)I_{n_1}-J_{n_1} & -2J_{n_1\times n_2}\\
   -\mathbf{1}_{n_2} & -2J_{n_2\times n_1} & (\lambda+1)I_{n_2}-J_{n_2}
   \end{array}\right|\\
&=&
   (\lambda+1)^{n_1+n_2-2}
   \left|\begin{array}{ccc}
   \lambda+n_2+1 & -(\lambda+n_1+1) & 0\\
   -n_2 & \lambda-n_1 & -\lambda\\
   -n_2 & 0 & \lambda+1
   \end{array}\right|\\
&=&
   (\lambda+1)^{n_1+n_2-2}f(\lambda),
\end{eqnarray*}
where
\[
f(\lambda)=\lambda^3-(n_1+n_2-2)\lambda^2-(3n_1n_2+2n_1+2n_2-1)\lambda-2n_1n_2-n_1-n_2.
\]
Obviously, $K_1\vee 2K_1\cong P_3\trianglelefteq G_1$, implying that $\lambda_2(G_1)\ge\lambda_2(P_3)=1-\sqrt{3}>-1$, and thus
$\lambda_2(G_1)$ is the second largest root of the equation
$f(\lambda)=0$. By direct check, we have $f(-1)=n_1n_2>0$ and
$f(-0.5858)=-0.2426n_1n_2-0.1716(n_1+n_2)-0.1005<0$,
%, which,
%together with the facts that
%$\lim_{\lambda\rightarrow-\infty}f(\lambda)=-\infty$ and
%$\lim_{\lambda\rightarrow+\infty}f(\lambda)=+\infty$, implies that
and thus $\lambda_2(G_1)\in(-1,-0.5858)$.

Now let $G_2=K_1\vee(K_{n_1}\cup K_{n_2}\cup K_{n_3})$ and $n=n_1+n_2+n_3+1$. The characteristic polynomial of $D(G_2)$ is
\begin{eqnarray*}
& &
   \det(\lambda I_n-D(G_2))\\
&=&
   \left|\begin{array}{cccc}
   \lambda & -\mathbf{1}_{n_1}^{\top} & -\mathbf{1}_{n_2}^{\top} & -\mathbf{1}_{n_3}^{\top}\\
   -\mathbf{1}_{n_1} & (\lambda+1)I_{n_1}-J_{n_1} & -2J_{n_1\times n_2} & -2J_{n_1\times n_3}\\
   -\mathbf{1}_{n_2} & -2J_{n_2\times n_1} & (\lambda+1)I_{n_2}-J_{n_2} & -2J_{n_2\times n_3}\\
   -\mathbf{1}_{n_3} & -2J_{n_3\times n_1} & -2J_{n_3\times n_2} & (\lambda+1)I_{n_3}-J_{n_3}\\
   \end{array}\right|\\
&=&
   (\lambda+1)^{n_1+n_2+n_3-3}
   \left|\begin{array}{cccc}
   \lambda+n_1+1 & \lambda+n_2+1 & 0 & 0\\
   0 & -(\lambda+n_2+1) & \lambda+n_3+1 & 0\\
   -(\lambda-n_1) & -n_2 & -n_3 & \lambda\\
   0 & -n_2 & -n_3 & -(\lambda+1)
   \end{array}\right|\\
&=&
   (\lambda+1)^{n-4}g(\lambda),
\end{eqnarray*}
where
\begin{eqnarray*}
   g(\lambda)
&=&
   \lambda^4-(n_1+n_2+n_3-3)\lambda^3-3(n_1n_2+n_1n_3+n_2n_3+n_1+n_2+n_3-1)\lambda^2\\
& &
   -[5(n_1n_2+n_1n_3+n_2n_3+n_1n_2n_3)+3(n_1+n_2+n_3)-1]\lambda\\
& &
   -3n_1n_2n_3-2(n_1n_2+n_1n_3+n_2n_3)-(n_1+n_2+n_3).
\end{eqnarray*}
Obviously, $K_1\vee 3K_1\cong S_4\trianglelefteq G_2$, implying that $\lambda_2(G_2)\ge\lambda_2(S_4)=2-\sqrt{7}>-1$ and $\lambda_n(G_2)\le\lambda_4(S_4)=-2<-1$, and thus $\lambda_2(G_2)$ and $\lambda_n(G_2)$ are
the second largest and the least roots of the equation $g(\lambda)=0$ respectively. By direct
check, $g(-1)=2n_1n_2n_3>0$ and
\begin{eqnarray*}
   g(-0.5858)
&=&
   -0.0710n_1n_2n_3-0.1005(n_1n_2+n_1n_3+n_2n_3)\\
& &
   -0.0711(n_1+n_2+n_3)-0.0416\\
&<&0.
\end{eqnarray*}
Thus we have $\lambda_2(G_2)\in(-1,-0.5858)$.
\end{proof}

%Denote by $\lambda_1,\lambda_2,\lambda_3,\lambda_4$ the roots of the
%equation $g(\lambda)=0$, where
%$\lambda_1\ge\lambda_2\ge\lambda_3\ge\lambda_4$. Evidently,
%$G_2\not\cong K_n$, and then by Theorem~\ref{Kn} and
%Lemma~\ref{least}~(ii), $\lambda_2(G)=\lambda_2$ and
%$\lambda_n(G)=\lambda_4<-1$. Note that $\lambda_1=\lambda_1(G)>0$
%and for $\lambda\in[\lambda_4,\lambda_3]\cup[\lambda_2,\lambda_1]$,
%$g(\lambda)\le 0$. Since $g(-1)=2n_1n_2n_3>0$, we have
%$-1\in(\lambda_2,\lambda_1)$, and by direct check,
%\begin{eqnarray*}
%   g(-0.5858)
%&=&
%   -0.0710n_1n_2n_3-0.1005(n_1n_2+n_1n_3+n_2n_3)\\
%& &
%   -0.0711(n_1+n_2+n_3)-0.0416<0.
%\end{eqnarray*}
%Thus, $\lambda_2(G_2)\in(-1,-0.5858)$, which, together with
%Theorem~\ref{Kn}, implies that
%$\lambda_2(G_2)\in[1-\sqrt{3},-0.5858)$.

\begin{Lemma}\label{Kn1234}
Let $n_1,n_2,n_3,n_4$ be positive integers with $n_1\le n_2\le n_3\le
n_4$. Then $\lambda_2(K_1\vee(K_{n_1}\cup K_{n_2}\cup K_{n_3}\cup
K_{n_4}))\in(-1,-0.5858)$ if and only if one of the following items holds$:$

(i) $n_1=n_2=1$ and $1\le n_3\le 873$;

(ii) $n_1=n_2=1$, $n_3\ge 874$, and $r(-0.5858)<0$, where
\begin{eqnarray*}
   r(\lambda)
&=&
   \lambda^4-(n_3+n_4)\lambda^3-(3n_3n_4+8n_3+8n_4+5)\lambda^2\\
& &
   -(12n_3n_4+11n_3+11n_4+6)\lambda-(6n_3n_4+4n_3+4n_4+2);
\end{eqnarray*}

(iii) $n_1=1$, $n_2=2$, and $n_3=2$, or $3=n_3\le n_4\le 870$, or $4=n_3\le n_4\le 14$, or $5=n_3\le n_4\le 8$, or $n_3=n_4=6$;

(iv) $n_1=1$, $n_2=3$, and $3=n_3\le n_4\le 7$ or $n_3=n_4=4$;

(v) $n_1=n_2=n_3=2$ and $2\le n_4\le 5$.
\end{Lemma}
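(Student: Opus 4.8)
The plan is to follow the template of Lemma~\ref{Kn12-Kn123}, but with one extra structural ingredient that makes the ``if and only if'' clean. Writing $G=K_1\vee(K_{n_1}\cup K_{n_2}\cup K_{n_3}\cup K_{n_4})$ and $n=n_1+n_2+n_3+n_4+1$, I would first record $D(G)$ with respect to the five-block partition $V(K_1)\cup V(K_{n_1})\cup\cdots\cup V(K_{n_4})$ and, by the same row/column reduction used on the all-ones blocks in Lemma~\ref{Kn12-Kn123}, factor
\[
\det(\lambda I_n-D(G))=(\lambda+1)^{n-5}\,h(\lambda),
\]
with $h$ an explicit quintic. Since $K_1\vee 4K_1\cong S_5$ has diameter two, $S_5\trianglelefteq G$, and a direct computation gives the distance spectrum of $S_5$ as $3+\sqrt{13},\,3-\sqrt{13},\,-2,\,-2,\,-2$. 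Hence $\lambda_2(G)\ge\lambda_2(S_5)=3-\sqrt{13}>-1$ and $\lambda_n(G)\le\lambda_5(S_5)=-2<-1$, so $\lambda_2(G)$ is the second largest root of $h$, and the least root of $h$ lies below $-1$.

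The key step is to pin down exactly how many eigenvalues of $D(G)$ exceed $-1$. I would look at $M=D(G)+I_n$: its diagonal is $1$, each within-clique block becomes $J_{n_i}$, each center--clique block is $\mathbf 1$, and each clique--clique block is $2J$. Thus $M$ is constant on the five classes, so $\mathrm{rank}(M)\le 5$ and its nonzero eigenvalues coincide (with multiplicity) with those of $B\,\mathrm{diag}(1,n_1,n_2,n_3,n_4)$, where
\[
B=\begin{pmatrix}1&1&1&1&1\\1&1&2&2&2\\1&2&1&2&2\\1&2&2&1&2\\1&2&2&2&1\end{pmatrix}.
\]
By Sylvester's law of inertia this nonzero spectrum has the same inertia as $B$, and $B$ has eigenvalues $4\pm\sqrt{13}$ (both positive) together with $-1$ of multiplicity three. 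Therefore $M$ has exactly two positive eigenvalues, i.e.\ $D(G)$ has \emph{exactly two} eigenvalues greater than $-1$, namely $\lambda_1(G)>0$ and $\lambda_2(G)\in(-1,0)$, and $h$ has no root at $-1$. Consequently $h>0$ on $(-1,\lambda_2(G))$ and $h<0$ on $(\lambda_2(G),\lambda_1(G))$; since $-0.5858\in(-1,\lambda_1(G))$, this yields the decisive equivalence
\[
\lambda_2(G)\in(-1,-0.5858)\iff\lambda_2(G)<-0.5858\iff h(-0.5858)<0.
\]
The whole lemma is thereby reduced to determining the sign of $h(-0.5858)$ over all admissible tuples $n_1\le n_2\le n_3\le n_4$.

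For the sign analysis I would exploit two simplifications. First, deleting one vertex from any clique yields an isometric induced subgraph, so by interlacing $\lambda_2(G)$ is nondecreasing in each $n_i$; hence the solution set $\{h(-0.5858)<0\}$ is downward closed in each coordinate, which confines the search to initial segments and finitely many boundaries. Second, equal singleton cliques are twins contributing the eigenvalue $-2$: when $n_1=n_2=1$, $(\lambda+2)\mid h(\lambda)$, so $h=(\lambda+2)r$ and the test becomes $r(-0.5858)<0$ with $r$ the stated quartic in $n_3,n_4$. Because $r(-0.5858)$ is affine in $n_4$ (indeed bilinear in $n_3,n_4$), the sign of its $n_4$-coefficient $-3\lambda^2-12\lambda-6$ evaluated at $-0.5858$ governs everything: it is negative precisely up to $n_3=873$ (so every $n_4$ qualifies, case (i)) and positive from $n_3=874$ on (so only a finite range of $n_4$ survives, recorded directly as case (ii)). Repeating the same reduction with $(n_1,n_2)$, and where needed $n_3$, fixed to the small constellations $(1,2)$, $(1,3)$, and $(2,2,2)$ produces the remaining cases (iii)--(v), each time solving an essentially linear inequality in the last free parameter for its explicit bound.

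The hard part is not the conceptual structure but the bookkeeping at the boundary. The threshold-determining coefficients are extremely small — the $n_3n_4$-coefficient of $r(-0.5858)$ is on the order of $10^{-4}$ — so the exact integer cutoffs ($873$, $870$, $14$, $8$, $7$, $\dots$) depend sensitively on using the precise decimal $-0.5858$ and on careful interval estimates, and each listed endpoint must be confirmed by checking that the value is negative there and nonnegative one step beyond. The second genuine obstacle is completeness: one must show that \emph{every} tuple outside (i)--(v) gives $h(-0.5858)\ge 0$. Here the monotonicity from interlacing is essential, since it collapses the infinitely many tuples with $n_1\ge 2$ (or larger $n_2$) to a finite set of critical configurations, after which a bounded number of boundary evaluations, organized along the ordering $n_1\le n_2\le n_3\le n_4$, finishes the argument.
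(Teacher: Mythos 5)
Your overall architecture matches the paper's: factor $\det(\lambda I_n-D(G))$ as $(\lambda+1)^{n-5}h(\lambda)$, use $S_5\trianglelefteq G$ to place $\lambda_2(G)$ among the roots of $h$, pull out the factor $(\lambda+2)$ when $n_1=n_2=1$, and use monotonicity of $\lambda_2$ under isometric vertex deletion to cut the search down to $n_1\le 2$, $n_2\le 3$ and finitely many boundary configurations (the paper phrases this monotonicity as exhibiting specific subgraphs $H\trianglelefteq G$ with $\lambda_2(H)>-0.5858$, e.g.\ $K_1\vee 4K_3$ and $K_1\vee(K_1\cup 3K_4)$, which is exactly your downward-closedness applied to the minimal excluded tuples). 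Your genuinely new ingredient is the inertia argument: $D(G)+I_n$ is a blow-up of the $5\times 5$ matrix $B$, whose inertia $(2,3,0)$ you compute correctly (eigenvalues $4\pm\sqrt{13}$ and $-1$ with multiplicity three), so $D(G)$ has exactly two eigenvalues exceeding $-1$ and hence $\lambda_2(G)\in(-1,-0.5858)\iff h(-0.5858)<0$ uniformly in all cases. This is cleaner than the paper, which re-locates $\lambda_2$ (and, where it factors $h$, also $\lambda_n$) separately in each case via interlacing with auxiliary graphs such as $K_1\vee(3K_1\cup K_2)$ and $K_1\vee(K_1\cup 2K_2\cup K_3)$ and via sign checks at $-3.9$, $-3$, $-1$, $0$; it also lets you replace the paper's Tables~1--4 by solving a linear inequality, since $h(-0.5858)$ is affine in $n_4$ once $n_1,n_2,n_3$ are fixed.

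Two caveats. First, as written your argument is a plan rather than a proof: the lemma's actual content is the list of exact integer cutoffs, and you perform none of the decisive evaluations --- neither the threshold computations yielding $873$, $870$, $14$, $8$, $7$, $6$, $5$, nor the completeness checks that the minimal excluded tuples (e.g.\ $(3,3,3,3)$, $(1,4,4,4)$, $(1,2,6,7)$, $(1,3,4,5)$, $(2,2,3,3)$, $(2,2,2,6)$) indeed fail; these must all be carried out, and the last group is precisely what the paper's interlacing subgraphs encode. Second, a small misstatement: $-3\lambda^2-12\lambda-6$ evaluated at $-0.5858$ is the coefficient of $n_3n_4$ in $r(-0.5858)$, a positive constant of order $10^{-4}$ independent of $n_3$, so it cannot ``be negative precisely up to $n_3=873$''; what changes sign between $n_3=873$ and $n_3=874$ is the full coefficient of $n_4$, namely $(-3\lambda^2-12\lambda-6)n_3+(-\lambda^3-8\lambda^2-11\lambda-4)\approx 1.1508\times 10^{-4}\,n_3-0.1005$. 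The intended argument is right, but it should be stated for the correct quantity.
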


\begin{proof}
Let $G=K_1\vee(K_{n_1}\cup K_{n_2}\cup K_{n_3}\cup K_{n_4})$ and $n=n_1+n_2+n_3+n_4+1$.
%Suppose that $\lambda_2(G)\in[1-\sqrt{3},-0.5858)$.
The characteristic polynomial of $D(G)$ is
\begin{eqnarray*}
& &
   \det(\lambda I_n-D(G))\\
&=&
   \left|\begin{array}{ccccc}
   \lambda & -\mathbf{1}_{n_1}^{\top} & -\mathbf{1}_{n_2}^{\top} & -\mathbf{1}_{n_3}^{\top} & -\mathbf{1}_{n_4}^{\top}\\
   -\mathbf{1}_{n_1} & (\lambda+1)I_{n_1}-J_{n_1} & -2J_{n_1\times n_2} & -2J_{n_1\times n_3} & -2J_{n_1\times n_4}\\
   -\mathbf{1}_{n_2} & -2J_{n_2\times n_1} & (\lambda+1)I_{n_2}-J_{n_2} & -2J_{n_2\times n_3} & -2J_{n_2\times n_4}\\
   -\mathbf{1}_{n_3} & -2J_{n_3\times n_1} & -2J_{n_3\times n_2} & (\lambda+1)I_{n_3}-J_{n_3} & -2J_{n_3\times n_4}\\
   -\mathbf{1}_{n_4} & -2J_{n_4\times n_1} & -2J_{n_4\times n_2} & -2J_{n_4\times n_3} & (\lambda+1)I_{n_4}-J_{n_4}\\
   \end{array}\right|\\
&=&
   (\lambda+1)^{n_1+n_2+n_3+n_4-4}\cdot\\
& &
   \left|\begin{array}{ccccc}
   -(\lambda+n_1+1) & \lambda+n_2+1 & 0 & 0 & 0\\
   0 & -(\lambda+n_2+1) & \lambda+n_3+1 & 0 & 0\\
   0 & 0 & -(\lambda+n_3+1) & \lambda+n_4+1 & 0\\
   \lambda-n_1 & -n_2 & -n_3 & -n_4 & \lambda\\
   0 & -n_2 & -n_3 & -n_4 & -(\lambda+1)
   \end{array}\right|\\
&=&
   (\lambda+1)^{n-5}h_{n_1,n_2,n_3,n_4}(\lambda),
\end{eqnarray*}
where
\begin{eqnarray*}
& &
   h_{n_1,n_2,n_3,n_4}(\lambda)\\
&=&
   \lambda^5-(n_1+n_2+n_3+n_4-4)\lambda^4-[3(n_1n_2+n_1n_3+n_1n_4+n_2n_3+n_2n_4+n_3n_4)\\
& &
   +4(n_1+n_2+n_3+n_4)-6]\lambda^3-[5(n_1n_2n_3+n_1n_2n_4+n_1n_3n_4+n_2n_3n_4)\\
& &
   +8(n_1n_2+n_1n_3+n_1n_4+n_2n_3+n_2n_4+n_3n_4)+6(n_1+n_2+n_3+n_4)-4]\lambda^2\\
& &
   -[7n_1n_2n_3n_4+8(n_1n_2n_3+n_1n_2n_4+n_1n_3n_4+n_2n_3n_4)\\
& &
  +7(n_1n_2+n_1n_3+n_1n_4+n_2n_3+n_2n_4+n_3n_4)+4(n_1+n_2+n_3+n_4)-1]\lambda\\
& &
   -4n_1n_2n_3n_4-3(n_1n_2n_3+n_1n_2n_4+n_1n_3n_4+n_2n_3n_4)\\
& &
   -2(n_1n_2+n_1n_3+n_1n_4+n_2n_3+n_2n_4+n_3n_4)-(n_1+n_2+n_3+n_4).
\end{eqnarray*}
Obviously, $K_1\vee 4K_1\cong S_5\trianglelefteq G$, implying that $\lambda_2(G)\ge\lambda_2(S_5)=3-\sqrt{13}>-1$, and thus $\lambda_2(G)$ is the
second largest root of the equation $h_{n_1,n_2,n_3,n_4}(\lambda)=0$.

If $n_1\ge 3$, then $K_1\vee 4K_3\trianglelefteq G$, and thus $\lambda_2(G)\ge\lambda_2(K_1\vee
4K_3)=-0.5830>-0.5858$. %a contradiction. Thus $n_1\le 2$.
If $n_2\ge 4$, then $K_1\vee (K_1\cup 3K_4)\trianglelefteq G$, and thus $\lambda_2(G)\ge\lambda_2(K_1\vee (K_1\cup
3K_4))=-0.5855>-0.5858$. %also a contradiction. Thus $n_2\le 3$.
To obtain the result, we need only to consider the cases when $n_1\le 2$ and $n_2\le 3$.

\noindent {\bf Case 1.} $n_1=1$.

\noindent {\bf Case 1.1.} $n_2=1$. If $n_4=1$, then $G=K_1\vee 4K_1\cong S_5$ with $\lambda_2(G)=3-\sqrt{13}\in(-1,-0.5858)$. Suppose that $n_4\ge 2$. We have $h_{n_1,n_2,n_3,n_4}(\lambda)=h_{1,1,n_3,n_4}(\lambda)=(\lambda+2)r_{1,1,n_3,n_4}(\lambda)$, where
\begin{eqnarray*}
   r_{1,1,n_3,n_4}(\lambda)
&=&
   \lambda^4-(n_3+n_4)\lambda^3-(3n_3n_4+8n_3+8n_4+5)\lambda^2\\
& &
   -(12n_3n_4+11n_3+11n_4+6)\lambda-(6n_3n_4+4n_3+4n_4+2).
\end{eqnarray*}
Then $\det(\lambda
I_n-D(G))=(\lambda+1)^{n-5}(\lambda+2)r_{1,1,n_3,n_4}(\lambda)$. Recall that $\lambda_2(G)>-1$ is the second largest root of $h_{1,1,n_3,n_4}(\lambda)=0$, and thus $\lambda_2(G)$ is also the second largest root of $r_{1,1,n_3,n_4}(\lambda)=0$. Obviously, $K_1\vee(3K_1\cup K_2)\trianglelefteq G$, and then $\lambda_n(G)\le\lambda_6(K_1\vee(3K_1\cup K_2))=-2.6288<-2$, implying that $\lambda_n(G)$ is the least root of $r_{1,1,n_3,n_4}(\lambda)=0$. Note that $\lambda_1(G)>0$ is the largest root of $r_{1,1,n_3,n_4}(\lambda)=0$. Since $r_{1,1,n_3,n_4}(-1)=3n_3n_4>0$, we have $\lambda_2(G)\in (-1,-0.5858)$ if and only if
$r_{1,1,n_3,n_4}(-0.5858)<0$. If $1\le n_3\le 873$, then
\begin{eqnarray*}
   r_{1,1,n_3,n_4}(-0.5858)
&=&
   (1.1508\times 10^{-4}n_3-0.1005)n_4-0.1005n_3-0.0832\\
&<&
   -0.1005n_3-0.0832\\
&<&
   0.
\end{eqnarray*}
Thus $\lambda_2(G)\in(-1,-0.5858)$ if and only if items~(i)
or~(ii) holds.

\noindent {\bf Case 1.2.} $n_2=2$. Then $n_4\ge n_3\ge 2$.

Suppose first that $n_3=2$. If $n_4=2$, then $G=K_1\vee(K_1\cup 3K_2)$ with
$\lambda_2(G)=-0.5925\in(-1,-0.5858)$. Assume that $n_4\ge 3$.
We have
$h_{n_1,n_2,n_3,n_4}(\lambda)=h_{1,2,2,n_4}(\lambda)=(\lambda+3)s_{1,2,2,n_4}(\lambda)$,
where
\[
s_{1,2,2,n_4}(\lambda)=\lambda^4-(n_4+4)\lambda^3-(16n_4+26)\lambda^2-(38n_4+32)\lambda-(17n_4+11).
\]
Then $\det(\lambda
I_n-D(G))=(\lambda+1)^{n-5}(\lambda+3)s_{1,2,2,n_4}(\lambda)$.
Obviously, $K_1\vee(K_1\cup 2K_2\cup K_3)\trianglelefteq G$,
and then $\lambda_n(G)\le\lambda_9(K_1\vee(K_1\cup
2K_2\cup K_3))=-3.6122<-3$, which, together with the fact that
$\lambda_2(G)>-1$, implies that $\lambda_2(G)$ and $\lambda_n(G)$
are the second largest and the least roots of
$s_{1,2,2,n_4}(\lambda)=0$ respectively. By direct check,
$s_{1,2,2,n_4}(-1)=6n_4>0$ and
$s_{1,2,2,n_4}(-0.5858)=-0.0292n_4-0.2547<0$, implying that
$\lambda_2(G)\in (-1,-0.5858)$. %It follows that
%$(n_1,n_2,n_3,n_4)=(1,2,2,n_4)$, where $n_4\ge 2$.

Suppose that $n_3=3$. We have
\begin{eqnarray*}
   h_{n_1,n_2,n_3,n_4}(\lambda)
&=&
   h_{1,2,3,n_4}(\lambda)\\
&=&
   \lambda^5-(n_4+2)\lambda^4-(22n_4+51)\lambda^3-(109n_4+150)\lambda^2\\
& &
   -(176n_4+148)\lambda-70n_4-46.
\end{eqnarray*}
Recall that $\lambda_2(G)$ is the second largest root of
$h_{1,2,3,n_4}(\lambda)=0$. Since $n_4\ge 3$, we have by direct
check that $h_{1,2,3,n_4}(-3.9)=32.1839n_4-89.9612>0$,
$h_{1,2,3,n_4}(-3)=-10n_4+20<0$, $h_{1,2,3,n_4}(-1)=18n_4>0$, and
$h_{1,2,3,n_4}(0)=-70n_4-46<0$. Thus $\lambda_2(G)\in (-1,0)$, and
$\lambda_2(G)\in (-1,-0.5858)$ if and only if
$h_{1,2,3,n_4}(-0.5858)=9.5128\times 10^{-4}n_4-0.8281<0$, i.e.,
$3\le n_4\le 870$.%Thus $(n_1,n_2,n_3,n_4)=(1,2,3,n_4)$, where $3\le
%n_4\le 870$.

Suppose that $n_3=4$. If $n_4\ge 15$, then $K_1\vee (K_1\cup K_2\cup
K_4\cup K_{15})\trianglelefteq G$, and thus
$\lambda_2(G)\ge\lambda_2(K_1\vee (K_1\cup K_2\cup K_4\cup
K_{15}))=-0.58577>-0.5858$. %Thus $n_4\le 14$.
Hence, together with Tabel~\ref{tab1}, we have  $\lambda_2(G)\in(-1,-0.5858)$  if and only if
$4\le n_4\le 14$.

\begin{table}[h]
\small \caption{The second distance eigenvalue of $K_1\vee (K_1\cup K_2\cup K_4\cup K_{n_4})$ for $4\le n_4\le 14$.}\label{tab1}
\bigskip
\renewcommand{\arraystretch}{1.6}
\centering
\begin{tabular}{|c|c|c|c|c|c|c|}
\noalign{\hrule height 0.5pt}%\hline
%\makebox[0.5cm][c]
%\raisebox{-1.5ex}[0pt]
$n_4$ & $4$ & $5$ & $6$ & $7$ & $8$ & $9$\\
\noalign{\hrule height 0.4pt}
\hline $\lambda_2$ & $-0.5877$ & $-0.5872$ & $-0.5869$ & $-0.5866$ & $-0.5864$ & $-0.5863$\\
\noalign{\hrule height 0.4pt}
$n_4$ & $10$ & $11$ & $12$ & $13$ & $14$ &\\
\noalign{\hrule height 0.4pt}
\hline $\lambda_2$ & $-0.5861$ & $-0.58604$ & $-0.58595$ & $-0.58588$ & $-0.58582$ &\\
\noalign{\hrule height 0.4pt} \hline
\end{tabular}
\end{table}

Suppose that $n_3=5$. If $n_4\ge 9$, then $K_1\vee (K_1\cup K_2\cup
K_5\cup K_9)\trianglelefteq G$, and thus
$\lambda_2(G)\ge\lambda_2(K_1\vee (K_1\cup K_2\cup K_5\cup
K_9))=-0.58576>-0.5858$. %Thus $n_4\le 8$.
Hence, together with Tabel~\ref{tab2}, we have $\lambda_2(G)\in(-1,-0.5858)$  if and only if
$5\le n_4\le 8$.

\begin{table}[h]
\small \caption{The second distance eigenvalue of $K_1\vee (K_1\cup K_2\cup K_5\cup K_{n_4})$ for $5\le n_4\le 8$.}\label{tab2}
\bigskip
\renewcommand{\arraystretch}{1.6}
\centering
\begin{tabular}{|c|c|c|c|c|}
\noalign{\hrule height 0.5pt}%\hline
%\makebox[0.5cm][c]
%\raisebox{-1.5ex}[0pt]
$n_4$ & $5$ & $6$ & $7$ & $8$\\
\noalign{\hrule height 0.4pt}
\hline $\lambda_2$ & $-0.5867$ & $-0.5864$ & $-0.5861$ & $-0.5859$\\
\noalign{\hrule height 0.4pt} \hline
\end{tabular}
\end{table}

Suppose finally that $n_3\ge 6$. If $n_4\ge 7$, then $K_1\vee
(K_1\cup K_2\cup K_6\cup K_7)\trianglelefteq G$, and thus $\lambda_2(G)\ge\lambda_2(K_1\vee (K_1\cup
K_2\cup K_6\cup K_7))=-0.58576>-0.5858$.  By direct check, $\lambda_2(K_1\vee (K_1\cup K_2\cup
2K_6))=-0.5860\in(-1,-0.5858)$. Thus $\lambda_2(G)\in(-1,-0.5858)$  if and only if
$n_3=n_4=6$.

From this case, we have $\lambda_2(G)\in(-1,-0.5858)$ if and only if item~(iii) holds.

\noindent {\bf Case 1.3.} $n_2=3$.

Suppose first that $n_3=3$. If
$n_4\ge 8$, then $K_1\vee (K_1\cup 2K_3\cup K_8)\trianglelefteq G$,
and thus $\lambda_2(G)\ge\lambda_2(K_1\vee
(K_1\cup 2K_3\cup K_8))=-0.58576>-0.5858$. %Thus $n_4\le 7$.
Hence, together with Table~\ref{tab3}, we have
$\lambda_2(G)\in(-1,-0.5858)$ if and only if $3\le n_4\le 7$.

\begin{table}[h]
\small \caption{The second distance eigenvalue of $K_1\vee (K_1\cup 2K_3\cup K_{n_4})$ for $3\le n_4\le 7$.}\label{tab3}
\bigskip
\renewcommand{\arraystretch}{1.6}
\centering
\begin{tabular}{|c|c|c|c|c|c|}
\noalign{\hrule height 0.5pt}%\hline
%\makebox[0.5cm][c]
%\raisebox{-1.5ex}[0pt]
$n_4$ & $3$ & $4$ & $5$ & $6$ & $7$\\
\noalign{\hrule height 0.4pt}
\hline $\lambda_2$ & $-0.5878$ & $-0.5870$ & $-0.5865$ & $-0.5862$ & $-0.5859$\\
\noalign{\hrule height 0.4pt} \hline
\end{tabular}
\end{table}

Now suppose
that $n_3\ge 4$. If $n_4\ge 5$, then $K_1\vee (K_1\cup K_3\cup
K_4\cup K_5)\trianglelefteq G$, and thus
$\lambda_2(G)\ge\lambda_2(K_1\vee (K_1\cup K_3\cup K_4\cup
K_5))=-0.58575>-0.5858$. %Thus $n_3=n_4=4$.
By direct check, $\lambda_2(K_1\vee (K_1\cup K_3\cup
2K_4))=-0.5862\in(-1,-0.5858)$. Thus $\lambda_2(G)\in(-1,-0.5858)$ if and only if $n_3=n_4=4$.

From this case, we have $\lambda_2(G)\in(-1,-0.5858)$ if and only if item~(iv) holds.

%\noindent {\bf Case 1.4.} $n_2\ge 4$. Then $K_1\vee (K_1\cup
%3K_4)\trianglelefteq G$, and thus by Lemma~\ref{Inter2},
%$\lambda_2(G)\ge\lambda_2(K_1\vee (K_1\cup 3K_4))=-0.5855>-0.5858$,
%a contradiction.

\noindent {\bf Case 2.} $n_1=2$. If
$n_3\ge 3$, then $K_1\vee (2K_2\cup 2K_3)\trianglelefteq G$, and
thus $\lambda_2(G)\ge\lambda_2(K_1\vee(2K_2\cup 2K_3))=\sqrt{2}-2>-0.5858$.
%It follows that $n_2=n_3=2$.
If $n_4\ge 6$, then $K_1\vee (3K_2\cup
K_6)\trianglelefteq G$, and thus
$\lambda_2(G)\ge\lambda_2(K_1\vee (3K_2\cup K_6))=-0.5856>-0.5858$.
%It follows that $2\le n_4\le 5$, which,
Hence, together with Tabel~\ref{tab4}, we have $\lambda_2(G)\in(-1,-0.5858)$ if and only if item~(v) holds.
\end{proof}

\begin{table}[h]
\small \caption{The second distance eigenvalue of $K_1\vee (3K_2\cup K_{n_4})$ for $2\le n_4\le 5$.}\label{tab4}
\bigskip
\renewcommand{\arraystretch}{1.6}
\centering
\begin{tabular}{|c|c|c|c|c|}
\noalign{\hrule height 0.5pt}%\hline
%\makebox[0.5cm][c]
%\raisebox{-1.5ex}[0pt]
$n_4$ & $2$ & $3$ & $4$ & $5$\\
\noalign{\hrule height 0.4pt}
\hline $\lambda_2$ & $-0.5887$ & $-0.5872$ & $-0.5864$ & $-0.5859$\\
\noalign{\hrule height 0.4pt} \hline
\end{tabular}
\end{table}

%\begin{Lemma}\label{Kn}
%Let $G$ be a connected graph on $n\ge 2$ vertices. Then
%$\lambda_2(G)\ge -1$ with equality if and only if $G\cong K_n$.
%Moreover, there is no graph with second distance eigenvalue
%in $(-1,1-\sqrt{3})$.
%\end{Lemma}
%
%\begin{proof}
% %The result follows.
%\end{proof}

%To characterize all connected graphs with second distance eigenvalue less than $-0.5858$, by Lemma \ref{Kn}, we need only to
%characterize  all connected graphs with second distance eigenvalue in $[1-\sqrt{3},-0.5858)$.

Let $C_n$ be the cycle on $n$ vertices.

Let $G$ be a graph. Let $|G|=|V(G)|$. For $e\in E(G)$, let $G-e$ be the graph obtained from $G$ by deleting the edge $e$.
Let $\overline{G}$ be the complement of $G$.

\begin{Lemma}\label{com}\cite[p.~10]{Me2001}
Let $G$ be a graph with at least two vertices. If both $G$ and $\overline{G}$ are connected, then $G$ contains an induced subgraph isomorphic to $P_4$.
\end{Lemma}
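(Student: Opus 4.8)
\noindent The plan is to prove the equivalent contrapositive --- that a $P_4$-free graph cannot have both $G$ and $\overline{G}$ connected --- exploiting the self-complementarity $\overline{P_4}\cong P_4$. Because of this isomorphism it suffices to exhibit an induced $P_4$ in either $G$ or $\overline{G}$: whichever one we find, the same four vertices induce a $P_4$ in $G$. So the whole argument splits naturally according to how large the diameters of $G$ and $\overline{G}$ are.

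\noindent \emph{First step (large diameter).} If $\operatorname{diam}(G)\ge 3$, choose $u,w$ with $d_G(u,w)=3$ and a shortest $u$--$w$ path $u\,x\,y\,w$. Any chord among $\{u,x,y,w\}$ would shorten this path, so minimality forces $ux,xy,yw$ to be the only edges among these vertices, and $\{u,x,y,w\}$ induces $P_4$. By the self-complementarity above, the identical argument applied inside $\overline{G}$ disposes of the case $\operatorname{diam}(\overline{G})\ge 3$.

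\noindent \emph{Remaining case (diameter two).} Since $G$ and $\overline{G}$ are both connected with at least two vertices, $G$ is neither complete nor edgeless, so each of $G$ and $\overline{G}$ has a non-adjacent pair; hence $\operatorname{diam}(G),\operatorname{diam}(\overline{G})\ge 2$, and the outstanding case is $\operatorname{diam}(G)=\operatorname{diam}(\overline{G})=2$. Here I would argue directly. As $\overline{G}$ has an edge, pick $a,b$ non-adjacent in $G$; since $\operatorname{diam}(G)=2$ they have a common neighbour $c$, so $a\,c\,b$ is an induced $P_3$. The edge $ac$ of $G$ is a non-edge of $\overline{G}$, so $\operatorname{diam}(\overline{G})=2$ produces a vertex $d$ adjacent in $\overline{G}$ to both $a$ and $c$, i.e. $d\not\sim a$ and $d\not\sim c$ in $G$. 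If $d\sim b$, then $a\,c\,b\,d$ induces $P_4$ and we are done; the only difficulty is the sub-case $d\not\sim b$, where $d$ is non-adjacent in $G$ to all of $a,b,c$, and one must feed the non-edges $da,db,dc$ back into the diameter-two hypothesis to supply the missing end of a $P_4$.

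\noindent \emph{Main obstacle.} The large-diameter step is routine; the crux is closing out the diameter-two case. I expect this to need a short but careful case analysis on the adjacencies of the auxiliary vertices --- equivalently, the classical fact that a connected, $P_4$-free graph has disconnected complement, which would contradict the connectivity of $\overline{G}$. I would organise it around a maximum-degree vertex $v$ with neighbour set $A=N(v)$ and non-neighbour set $B$: connectivity yields an edge $ab$ with $a\in A$ and $b\in B$, and then the forbidden induced paths $a'\,v\,a\,b$ and $v\,a\,b\,b'$ force enough structure on $A$ and $B$ to exhibit a join $G=G_1\vee G_2$, making $\overline{G}$ disconnected and completing the contradiction.
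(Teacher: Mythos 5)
The paper offers no proof of this lemma at all --- it is imported verbatim from Merris's book --- so there is nothing in-paper to compare against; the only question is whether your outline closes, and it does not. Your reductions are sound: a geodesic of length three induces a $P_4$, and since $\overline{P_4}\cong P_4$ an induced $P_4$ in $\overline{G}$ yields one in $G$, so the case $\operatorname{diam}(G)=\operatorname{diam}(\overline{G})=2$ is indeed all that remains. But the configuration you reach there ($a\sim c\sim b$, $ab\notin E$, and $d$ non-adjacent in $G$ to each of $a,b,c$) cannot be closed by ``one more application of the diameter hypothesis.'' Concretely: applying diameter two in $G$ to the non-adjacent pair $c,d$ yields a common neighbour $e$, and nothing prevents $e$ from being adjacent to all of $a,b,c,d$; the five-vertex graph with edge set $\{ac,bc,ae,be,ce,de\}$ contains no induced $P_4$ (its four-vertex induced subgraphs are $P_3\cup K_1$, $K_4$ minus an edge, $K_{1,3}$, and two paws). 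That graph is itself $P_4$-free with disconnected complement ($e$ is dominating), so everything you have derived about the named vertices is consistent with there being no $P_4$ among them: no bounded local case analysis of this shape can terminate, and at some point you must use the connectivity of $\overline{G}$ globally rather than ``diameter two'' at two chosen pairs. Your fallback sketch (maximum-degree vertex, the paths $a'\,v\,a\,b$ and $v\,a\,b\,b'$ forcing a join) is the right classical idea, but as written it is a plan, not a proof; the promised case analysis is exactly the missing content.

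A complete elementary argument that replaces the diameter split entirely: pick any vertex $v$. Since $\overline{G}$ is connected, $v$ has a non-neighbour, and since $G$ is connected some non-neighbour $w$ of $v$ lies at distance exactly two, say with common neighbour $u$. If $G-v$ is disconnected, then $u$ and $w$ lie in one component of $G-v$ (they are adjacent), while connectivity of $G$ gives a neighbour $y$ of $v$ in another component; then $y$ is adjacent to neither $u$ nor $w$, and $w\,u\,v\,y$ is an induced $P_4$. By self-complementarity of $P_4$ the same argument applies inside $\overline{G}$ whenever $\overline{G}-v$ is disconnected. If instead $G-v$ and $\overline{G}-v$ are both connected for every $v$, pass to $G-v$ and induct; the base cases are immediate, since for $n\le 3$ no graph has both $G$ and $\overline{G}$ connected, and $P_4$ is the unique such graph on four vertices. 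This closes the lemma with no discussion of diameters.
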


Now we are ready to prove our main result.

\begin{Proof}
If $G$ is not  complete, then %the diameter of
%$G$ is  at least two, implying that
$P_3\trianglelefteq G$, and thus $\lambda_2(G)\ge
\lambda_2(P_3)=1-\sqrt{3}>-1$. For $n\ge 2$,
$D(K_n)=J_n-I_n$, and thus $\lambda_2(K_n)=-1$. Thus
$\lambda_2(G)\in(-\infty, -1]$ if and only if $G\cong K_n$ for some
$n\ge 2$.

Suppose that $G$ is a connected graph with
$\lambda_2(G)\in(-1,-0.5858)$.

Suppose that $\overline{G}$ is connected. By Lemma~\ref{com}, $G$
contains an induced subgraph isomorphic to $P_4$. Then
$P_4\trianglelefteq G$ or $C_5\trianglelefteq G$, and thus $\lambda_2(G)\ge
\min\{\lambda_2(P_4),\lambda_2(C_5)\}=\lambda_2(P_4)=-0.58579>-0.5858$,
a contradiction. It follows that $\overline{G}$ is disconnected. Let
$H_1,\dots,H_k$ be the (connected) components of $\overline{G}$,
where $k\ge 2$. Let $G_i=\overline{H_i}$, where $1\le i\le k$.
Obviously, $G_i$ is an induced subgraph of $G$ for $1\le i\le k$,
and $G\cong\vee_{i=1}^k G_i$.

\noindent {\bf Claim 1.} If  $|G_i|\ge 2$, then $G_i$ is disconnected in $G$, where $1\le i\le k$.

Suppose that $|G_i|\ge 2$ and $G_i$ is a connected subgraph of $G$ for some $i$
with $1\le i\le k$. Note that $H_i=\overline{G_i}$ is connected in
$\overline{G}$. By Lemma~\ref{com}, $G_i$ contains an induced
subgraph isomorphic to $P_4$. Since $k\ge 2$, we have
$K_1\vee P_4\trianglelefteq G$, and thus
$\lambda_2(G)\ge\lambda_2(K_1\vee P_4)=-0.3820>-0.5858$, a
contradiction. This proves Claim $1$.

If $|G_i|\ge 2$ for each $i$ with $1\le i\le k$, then by Claim $1$,
$G_i$ is disconnected in $G$, which, by noting that $k\ge
2$, implies that $2K_1\vee 2K_1\cong C_4\trianglelefteq G$, and thus $\lambda_2(G)\ge\lambda_2(C_4)=0$, a
contradiction. Thus, $|G_i|=1$ for some $i$ with $1\le i\le k$, say
$i=1$.

Since $\lambda_2\left(\vee_{i=1}^{n}K_1\right)=\lambda_2(K_n)=-1\not\in(-1,-0.5858)$,
we have $|G_j|\ge 2$ for some $j$ with $2\le j\le k$, say $j=2$. By
Claim~$1$, $G_2$ is disconnected in $G$. If $k\ge 3$, then
$K_4-e\trianglelefteq G$, and thus
$\lambda_2(G)\ge\lambda_2(K_4-e)=\frac{3-\sqrt{17}}{2}>-0.5858$, a contradiction. It
follows that $k=2$, i.e., $G\cong G_1\vee G_2=K_1\vee G_2$, where $G_2$
is a disconnected subgraph of $G$.

If $G_2$ has at least five components, then $K_1\vee 5K_1\cong S_6\trianglelefteq G$, and thus
$\lambda_2(G)\ge\lambda_2(S_6)=4-\sqrt{21}>-0.5858$, a contradiction.
If $G_2$ has a component which is not complete, then there exists a vertex $u$ in this component which has two
nonadjacent neighbors $v$ and $w$, and thus the subgraph of $G$
induced by $V(G_1)\cup\{u,v,w\}$ is isomorphic to $K_4-e$, implying
that $K_4-e\trianglelefteq G$, also a contradiction. Thus
$G\cong K_1\vee(K_{n_1}\cup\dots\cup K_{n_r})$, where $2\le r\le 4$ and
$n_i\ge 1$ for each $i$ with $1\le i\le r$. Now the result follows from Lemmas~\ref{Kn12-Kn123}
and~\ref{Kn1234}.
\end{Proof}

\bigskip

\noindent {\bf Acknowledgement.} This work was supported by  the
Specialized Research Fund for the Doctoral Program of Higher
Education of China (No.~20124407110002) and the National Natural
Science Foundation of China (No.~11071089).

\end{document}